\newtheorem{theorem}{Theorem}
\newtheorem{lemma}[theorem]{Lemma}
\newtheorem{definition}[theorem]{Definition}
\newtheorem{corollary}[theorem]{Corollary}
\title{A Note on the Parallel Cleaning of Cliques} 
\author{A. Angeli Ayello\thanks{Department of Mathematics, ETH Zurich, Zurich, Switzerland; email: {\tt angelia@ethz.ch}.},~M.~E.~Messinger\thanks{Department of Mathematics and Computer Science, Mount Allison University, Sackville, NB, Canada; email: {\tt mmessinger@mta.ca}.}}
\begin{document} 
\maketitle

\begin{abstract} We disprove a conjecture proposed in [Gaspers et al., {\it Discrete Applied Mathematics}, 2010] and provide a new upper bound for the minimum number of brushes required to continually parallel clean a clique.  
\end{abstract}

\noindent {\bf Key words:} graph cleaning, graph searching

\bigskip

\noindent {\bf AMS 2010 subject classification:}  05C69,  05C57, 68R10

 \section{Introduction and Definitions}\label{sec:intro}

In a graph cleaning model, every vertex and edge of a graph is iniitally considered to be contaminated or {\it dirty} and brushes are distributed to a set of vertices.  A vertex may be cleaned if it contains as many brushes as dirty incident edges.  When a vertex is cleaned, it sends exactly one brush along each dirty incident edge, cleaning those edges.  In the {\it sequential} cleaning model (see~\cite{note, MNP, greedy} for example), at each step exactly one vertex is cleaned and the {\it brush number} of a graph $G$ is defined as the minimum number of brushes needed to clean $G$ using the sequential cleaning model.  In the {\it parallel} cleaning model (see~\cite{parallel, MNP} for example), at each step every vertex that may be cleaned, is cleaned simultaneously and the parallel brush number for a graph $G$ is the minimum number of brushes needed to clean $G$ using the parallel cleaning model.   In~\cite{MNP}, the authors showed that for any graph $G$, the sequential and parallel brush numbers coincide, thus we denote by $b(G)$, the brush number of $G$.

Figures~\ref{figSeq} and~\ref{figPara} illustrate the sequential and parallel cleaning models on a $5$-cycle where one vertex initially has $2$ brushes and all other vertices initially have $0$ brushes.  The dotted lines and white vertices indicate clean edges and clean vertices and for the end of each step, the distribution of brushes (i.e. the number of brushes at each vertex) is given.  The reader will observe that in Figure~\ref{figSeq}, there is a choice as to the second vertex cleaned (and also the third and fourth vertices cleaned).  It was shown in~\cite{MNP} that such decisions can be made arbitrarily as they do not affect whether all vertices (and edges) of a graph can be cleaned; that is, whether a graph can be cleaned depends entirely on the number and initial distribution of brushes.  We further observe that at the end of step $4$, every edge has been cleaned, but one vertex has not yet been cleaned. This example illustrates that after all edges have been cleaned, one additional step may be required to ensure all vertices are clean.  Finally, in the parallel model, it is important to note that if adjacent vertices are cleaned during the same step, both vertices will send a brush along the common edge to the other vertex.  This can be observed in Figure~\ref{figPara}.  

\begin{figure}[htbp]
\[\includegraphics[width=0.8\textwidth]{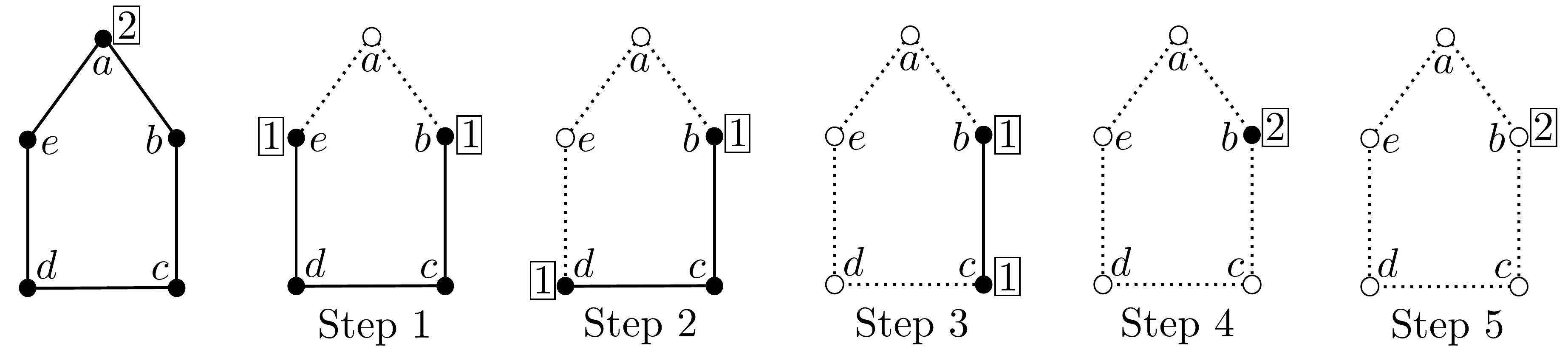}\]
\caption{Sequential cleaning model with $2$ brushes initially on one vertex of $C_5$.}
\label{figSeq}
\end{figure}

\begin{figure}[htbp]
\[\includegraphics[width=0.5\textwidth]{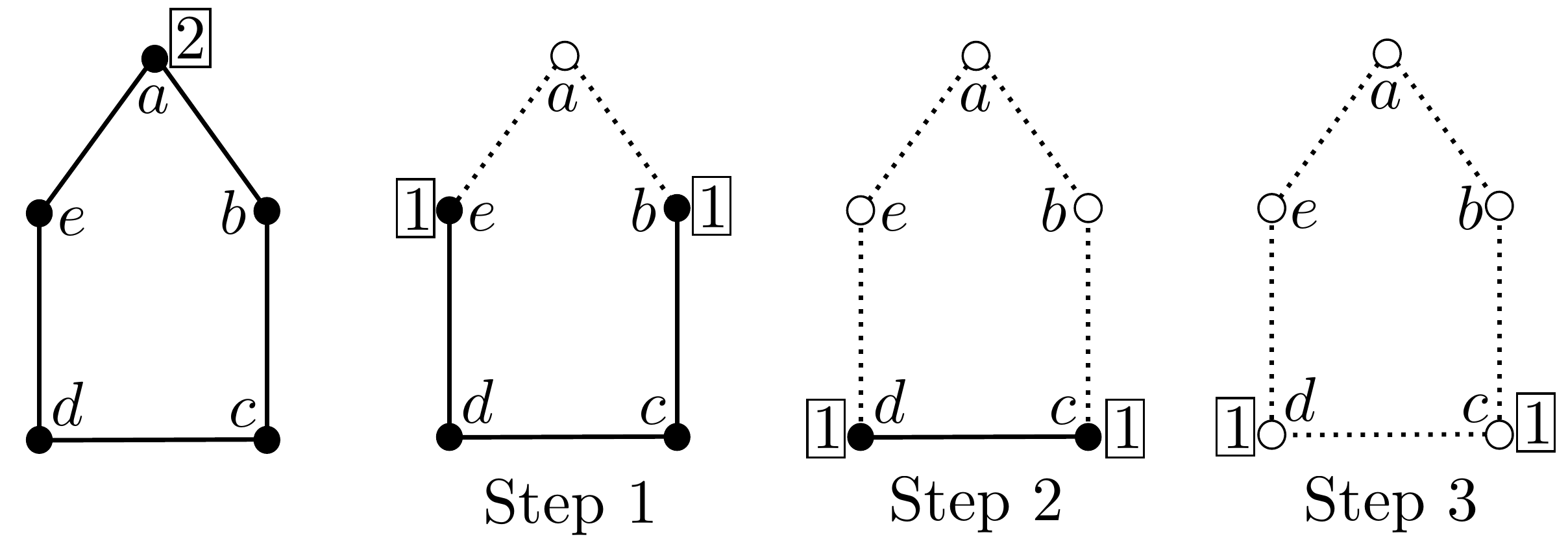}\]
\caption{Parallel cleaning model with $2$ brushes initially on one vertex of $C_5$.}
\label{figPara}
\end{figure}

In this note, we are concerned with the number of brushes required to {\it continually} parallel clean a complete graph (clique).  The sequential cleaning model considers a network that must be cleaned periodically of a regenerating contaminant.  In practice, mechanized brushes are sometimes used to remove regenerating contaminants such as algae and zebra mussels from water pipes as routine maintenance~\cite{Zebra, Zebra2} because zebra mussels can accumulate and restrict water flow in municipal, industrial, and private water systems~\cite{Zebra1}.  As a result, we are interested in whether locations of the brushes after a system has been cleaned, can be used as starting locations for the brushes to clean the system again.  The sequential cleaning model is inherently reversible (see Theorem 2.3 in~\cite{MNP}); that is, a final configuration of brushes on a graph $G$ is always a viable initial configuration of brushes that can be used to clean $G$ again.  Although $b(G)$ brushes can be used to parallel clean a graph $G$ once, the parallel model is not always ``reversible'' (see for example, Figure~\ref{figPara}).  Thus, for many graphs, additional brushes beyond $b(G)$ are required in order to {\it continually} parallel clean the graph and the continual parallel brush number is denoted $cpb(G)$.  

Formally, at each step $t$, $\omega_t(v)$ denotes the number of
brushes at vertex $v$ ($\omega_t: V \rightarrow \mathbb{N} \cup
\{0\}$) and $D_t$ denotes the set of dirty vertices. An edge $uv\in
E$ is dirty if and only if both $u$ and $v$ are dirty: $\{u,v\}
\subseteq D_t$. Finally, let $D_t(v)$ denote the number of dirty
edges incident to $v$ at step $t$:
\begin{equation*}
D_t(v)=\
\begin{cases}
|N(v) \cap D_t| & \textrm{\ if\ \ }  v \in D_t\\
0               & \textrm{\ otherwise.}
\end{cases}
\end{equation*}

We next formally define the parallel graph cleaning process, following the definitions provided in~\cite{parallel}.
\begin{definition}
The \textbf{parallel cleaning process} $\mathfrak{C}(G,\omega_0)=\{(\omega_t,
D_t)\}_{t=0}^K$ of an undirected graph $G=(V,E)$ with an
\textbf{initial configuration of brushes} $\omega_0$ is as follows:

\begin{description}
  \item[(0)] Initially, all vertices are dirty: $D_0=V$;  set $t:=0$

  \item[(1)] Let $\rho_{t+1} \subseteq D_t$
  be the set of vertices such that $\omega_t(v) \geq D_t(v)$ for $v \in \rho_{t+1}$.
  If $\rho_{t+1} = \emptyset$, then stop the process ($K=t$), return the
  \textbf{parallel cleaning sequence}
  $\rho =(\rho_1,\rho_2,\dots,\rho_K)$,
  the \textbf{final set of dirty vertices} $D_K$,
  and the \textbf{final configuration of brushes} $\omega_K$

  \item[(2)] Clean each vertex $v \in \rho_{t+1}$ and all dirty incident edges by
  traversing a brush from $v$ to each dirty neighbour.
  More precisely, $D_{t+1}=D_t \setminus \rho_{t+1}$,
  for every $v \in \rho_{t+1}$, $\omega_{t+1}(v)=\omega_t(v) - D_t(v) + |N(v) \cap
  \rho_{t+1}|$,
  and for every $u \in D_{t+1}$,
  $\omega_{t+1}(u)=\omega_t(u)+|N(u) \cap \rho_{t+1}|$
  the other values of $\omega_{t+1}$ remain the same as in $\omega_t$

  \item[(3)] $t:=t+1$ and go back to {\rm \textbf{(1)}}.
\end{description}
\end{definition}

\begin{definition}
A graph $G=(V,E)$ \textbf{can be cleaned} by the initial
configuration of brushes $\omega_0$ if the cleaning process
$\mathfrak{C}(G,\omega_0)$ returns an empty final set of dirty vertices
($D_T=\emptyset$).\end{definition}

\begin{definition}\label{defn:1}
Let $G$ be a network with initial configuration $\omega_0^0=\omega_0$.
Then $G$ can be \textbf{continually cleaned} using the parallel
cleaning process beginning from configuration $\omega_0$ if for each
$s \in \mathbb{N} \cup \{0\}$, $G$ can be cleaned in parallel using
initial configuration $\omega_0^s$, yielding the final configuration
$\omega_{K_s}^s$ where $\omega_0^{s+1} = \omega_{K_s}^s$.

\smallskip

The \textbf{continual parallel brush number}, $cpb(G)$, of a network
$G$ is the minimum number of brushes needed to continually clean $G$
using a parallel cleaning process.
\end{definition}

In~\cite{parallel}, the authors provided bounds for $cpb$ for a number of graphs and determined $cpb$ exactly for some classes of graphs.  In particular, they showed 
\begin{equation}\label{bound}\frac{5}{16}n^2 + O(n) \leq cpb(K_n) \leq \frac{4}{9}n^2+O(n).
\end{equation}
Based on these bounds and computational results, the authors~\cite{parallel} conjectured 
\begin{equation}\label{conj}\lim_{n\rightarrow \infty} \frac{b(K_n)}{cpb(K_n)} = 9/16.
\end{equation} 
The main result of this note, stated below, provides an improved upper bound for $cpb(K_n)$ which disproves the above conjecture~(\ref{conj}) of~\cite{parallel}.  \\

\noindent{\bf Theorem~\ref{cor:end}.} Let $n_0$ be a non-negative integer and for $i \in \mathbb{Z}^+$, let $n_i = 3n_{i-1}+d_i$ for $d_i \in \{1,2,3\}$.  Then $$cpb(K_{n_i}) \leq  \Big[ \frac{3}{7}+\frac{1}{63}\Big(\frac{2}{9}\Big)^{i+1}\Big] n_i^2 + O(n_i).$$ \medskip

In~\cite{MNP}, it was determined that $b(K_n)=\lfloor \frac{n^2}{4}\rfloor$.  Combined with the results of Theorem~\ref{cor:end}, the following corollary is immediate.\\ 

\noindent{\bf Corollary~\ref{theorem:lim}.} Let $n_0$ be a non-negative integer and for $i \in \mathbb{Z}^+$, let $n_i = 3n_{i-1}+d_i$ for $d_i \in \{1,2,3\}$. Then $$\lim_{i \to \infty}\ \frac{b(K_{n_i})}{cpb(K_{n_i})}\ge\frac{7}{12}.$$  \medskip

\noindent The proofs of Theorem~\ref{cor:end} and Corollary~\ref{theorem:lim} can be found in Section~\ref{sec:results}.  

\section{Results}\label{sec:results}

\begin{definition}  Label the vertices of $K_n$ as $v_0,v_1,\dots,v_{n-1}$ and let $\omega_0$ be an initial configuration of brushes that will parallel clean $K_n$, leaving final configuration $\omega_K$.   Then $\omega_0$ is a  \textbf{1-clique configuration} if 

(1) $\omega_0(v_i) \leq n-1$ for all $i \in \{0,1,\dots,n-1\}$ and 

(2) there is a one-to-one correspondence between the elements of $\{ \omega_0(v_0),\omega_0(v_1),\dots,\omega_0(v_{n-1})\}$ and $\{\omega_K(v_0),\omega_K(v_1),\dots,\omega_K(v_{n-1})\}$.
\end{definition}

For a $1$-clique configuration $\omega_0$, let $S_n(\omega_0) = \sum_{i=0}^{n-1} \omega_0(v_i).$  If $\omega_0$ is an arbitrary initial $1$-clique configuration, we denote $S_n(\omega_0)$ as simply $S_n$.   Then certainly, $cpb(K_n) \leq S_n.$

For $n \equiv 1,2$ (mod $3$), the initial configurations given in~\cite{parallel} that achieve the upper bound of~(\ref{bound}) are $1$-clique configurations, however, the initial configuration given for $n \equiv 0$ (mod $3$) in~\cite{parallel} is not a $1$-clique configuration.  Having $1$-clique configurations are key to our main result later in this section, so in Theorem~\ref{thm:n=3kbetterbound}, we provide a $1$-clique configuration for $K_n$ for $n \equiv 0$ (mod $3$) that uses $\frac{4}{9}n^2+O(n)$ brushes (the proof is similar to that of  Theorem 4.8 in~\cite{parallel}).  

A vertex is said to be {\it primed} if it has at least as many brushes as incident dirty edges.  Vertices are cleaned in three phases:  in phase $1$, a set of $k$ vertices are cleaned,  starting with the only primed vertex, then two primed vertices, then four primed vertices, and so on (although the cardinality of the last subset of vertices need not be a power of $2$).  In phase $2$, a set of $k+3$ vertices are cleaned all in one step.  In phase $3$, the remaining set of $k$ vertices are cleaned in one step, but being a clique, the number of brushes at each vertex does not change during this step. 

\begin{theorem}\label{thm:n=3kbetterbound}
Let $n=3k+3$  for some non-negative integer $k$ and label the vertices of $K_n$ as $v_0,v_1,\dots,v_{3k+2}$. If $$ \omega_0(v_i) = \begin{cases} k+2 & \textrm{ if } i= k,k+1,\dots,2k+2 \\ i & \textrm{ otherwise,}\end{cases}$$ then $K_{3k+3}$ can be cleaned with $1$-clique configuration $\omega_0$, using a total of $4k^2+7k+6 = \frac{4}{9}n^2+O(n)$ brushes. \end{theorem}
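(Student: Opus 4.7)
The plan is to verify the three-phase parallel cleaning procedure described before the theorem, compute the exact brush counts at every step, and then check that the final multiset of brushes matches the initial one. Two preliminary observations are immediate: $\omega_0(v_{3k+2})=3k+2=n-1$ is the largest value, so property~$(1)$ of the $1$-clique definition holds, and a direct computation gives $\sum_{i=0}^{k-1}i+(k+3)(k+2)+\sum_{i=2k+3}^{3k+2}i=\tfrac{k(k-1)}{2}+(k^2+5k+6)+\tfrac{5k^2+5k}{2}=4k^2+7k+6$.

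For Phase~$1$ the approach is induction on the step $s$. The invariant to carry is that at the start of step $s$ the cleaned vertices are exactly $v_{3k+4-2^{s-1}},\dots,v_{3k+2}$; each remaining dirty vertex has $D_t=3k+3-2^{s-1}$ dirty incident edges; each remaining high vertex $v_j$ carries $j+2^{s-1}-1$ brushes; and each middle vertex carries $k+1+2^{s-1}$ brushes. A short case check shows that whenever $2^s\le k+1$ the primed set is exactly the $2^{s-1}$ highest-indexed dirty vertices (no middle or low vertex qualifies), so doubling is self-sustaining. The phase concludes at step $m=\lceil\log_2(k+1)\rceil$; if $k+1$ is not a power of two then step $m$ is partial and primes only the remaining $r=k+1-2^{m-1}$ high vertices, without yet priming any middle vertex.

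Applying the update rule, a vertex $v_j$ cleaned at doubling step $s$ ends with $j+3\cdot 2^{s-1}-3k-5$ brushes, so step $s$ contributes the consecutive integers $\{2^{s-1}-1,\dots,2^s-2\}$; the analogous computation for the partial last step contributes $\{2^{m-1}-1,\dots,k-1\}$. Concatenating these intervals gives the multiset $\{0,1,\dots,k-1\}$ on the $k$ cleaned high vertices. For Phase~$2$, each middle vertex has accumulated $(k+2)+k=2k+2$ brushes against $2k+2$ dirty neighbours and is primed, while each low vertex has only $i+k<2k+2$ brushes and is not; exactly the $k+3$ middle vertices are cleaned in one step, each ending with $(2k+2)-(2k+2)+(k+2)=k+2$ brushes, while each low vertex gains $k+3$ brushes to hold $i+2k+3$. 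In Phase~$3$ the $k$ low vertices form a dirty clique with $k-1$ dirty edges each; they are all primed, and a simultaneous cleaning of a clique leaves brush counts unchanged, so $\omega_K(v_i)=i+2k+3$. The final multiset $\{0,\dots,k-1\}\cup\{k+2\}^{k+3}\cup\{2k+3,\dots,3k+2\}$ equals $\omega_0$, establishing property~$(2)$.

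The main obstacle I anticipate is the bookkeeping in Phase~$1$: maintaining all four invariants simultaneously over the $m$ steps, checking that no middle or low vertex is accidentally primed during any doubling step, and verifying that the partial final step contributes exactly the values $\{2^{m-1}-1,\dots,k-1\}$ needed to complete the interval $\{0,1,\dots,k-1\}$. The edge cases $k=0$ (where Phases~$1$ and~$3$ are empty) and $k+1$ a power of two (where Phase~$1$ terminates after a full doubling step with no partial step) should be noted separately as degenerate, but neither introduces new ideas.
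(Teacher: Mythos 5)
Your proposal is correct and follows essentially the same route as the paper's own proof: a three-phase analysis in which Phase 1 cleans the top $k$ vertices via doubling steps up to step $\lceil\log_2(k+1)\rceil$ (with a possibly partial last step), Phase 2 cleans the $k+3$ middle vertices in one step, and Phase 3 cleans the remaining clique of $k$ low vertices with brush counts unchanged, after which the final multiset of brushes is matched to the initial one. Your explicit concatenation of the intervals $\{2^{s-1}-1,\dots,2^s-2\}$ is a slightly cleaner way of recording what the paper expresses through its closed-form final configuration, but it is the same argument.
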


\begin{proof}  For $k \in \{0, 1, \dots, 8\}$ we can manually check that $\omega_0$ is a $1$-clique configuration.  Thus, we consider $k >8$.  

Consider the vertices cleaned in phase $1$: $\{v_{3k+2},v_{3k+1},v_{3k},\dots,v_{2k+3}\}$.  Only $v_{3k+2}$ is cleaned during step $1$.  Suppose that $2^{j-1}$ vertices are cleaned during step $j$ for $j \in \{2,3,\dots, t-1\}$ where $t < \lceil \log_2(k+1)\rceil$.  We inductively show that during step $t$, $2^{t-1}$ vertices are cleaned.  Let $v_i$ be a vertex cleaned during step $t$.  Then $$\omega_{t-1}(v_i)  =   i+(2^0+2^1+\cdots+2^{t-2}) =i+2^{t-1}-1 \geq D_{t-1}(v_i) = 3k+2-(2^{t-1}-1) ~\Longrightarrow i \geq 3k+4-2^t.$$  As $v_i$ could not have been cleaned during the previous step, $\omega_{t-2}(v_i) < D_{t-2}(v_i)$, which implies $i< 3k+4-2^{t-1}.$  Thus, during step $t < \lceil \log_2(k+1)\rceil$, $2^{t-1}$ vertices are cleaned.  Finally, we observe that for $v_i$ cleaned during step $t < \lceil \log_2(k+1)\rceil$, $$\omega_t(v_i) = \omega_{t-1}(v_i) - D_{t-1}(v_i)+(2^{t-1}-1) = i+3 \cdot 2^{t-1}-3k-5.$$

We consider the remaining vertices of phase 1; that is, the vertices cleaned during step $\ell = \lceil \log_2(k+1)\rceil$. Let $v_i$ be one such vertex.  Then $\omega_{\ell - 1}(v_i) = i + 2^{\ell - 1} - 1$, and $D_{\ell - 1}(v_i) = 3k + 3 - 2^{\ell -1}$. It follows that $$\omega_{\ell - 1}(v_i) - D_{\ell - 1}(v_i) \geq 2k+ 3 + 2^{\ell - 1} - 1 - (3k + 3 - 2^{\ell -1}) \geq  2^{\ell} - (k + 1) \geq  2^{\ell} - 2^{\log_2(k + 1)} \geq 0$$ since $\ell = \lceil \log_2(k + 1)\rceil \geq  \log_2(k + 1)$. Therefore the remaining vertices of phase 1 are cleaned during step $\ell$.  Since there are a total of $k-(2^{\ell-1}-1)-1$ vertices other than $v_i$ cleaned during step $\ell$, $$\omega_\ell(v_i) = \omega_{\ell-1}(v_i)-(2k+3) = i+2^{\ell-1}-2k-4.$$

We next consider the vertices cleaned during phase 2: $\{v_{2k+2},v_{2k+1},\dots,v_k\}$.  No vertex $v_i \in \{v_{2k+2},v_{2k+1},\dots,v_k\}$ can be cleaned prior to step $\ell+1$ as $$\omega_{\ell-1}(v_i) = \omega_0(v_i)+2^{\ell-1}-1 = k+1+2^{\ell-1} < D_{\ell-1}(v_i) = 3k+3-2^{\ell-1}.$$  However as exactly $k$ vertices were cleaned during phase 1, $$\omega_{\ell}(v_i) = \omega_0(v_i) + k  = 2k+2 \geq D_{\ell}(v_i) = 2k+2,$$ and vertices $v_i \in \{v_{2k+2},v_{2k+1},\dots,v_k\}$  are all cleaned during step $\ell+1$.   Further, we note that these vertices will each have $k+2$ brushes in the final configuration.  

Finally, we consider the vertices cleaned during phase 3: $\{v_{k-1},v_{k-2},\dots,v_0\}$.  No vertex $v_i \in \{v_{k-1},v_{k-2},\dots,v_0\}$ can be cleaned prior to step $\ell+2$ as $$\omega_\ell(v_i) = \omega_0(v_i)+k = i+k < D_\ell(v_i) = 2k+2.$$  However, all $v_i \in \{v_{k-1},v_{k-2},\dots,v_0\}$ are cleaned during step $\ell+2$ as $$\omega_{\ell+1}(v_i) = \omega_0(v_i) + 2k+3 = i+2k+3\geq D_{\ell+1}(v_i) = k-1.$$ 

The final configuration is
	$$\omega_{\ell+2}(v_i) =
	\begin{cases}
		i + 3 \cdot 2^{t^* - 1} - 3k - 5
			& \text{~for~} i = 3k - 2^{\ell-1} + 4, \dots, 3k + 2 \\
		i + 2^{\ell - 1} - 2k - 4
			& \text{~for~} i = 2k + 3, \dots, 3k - 2^{\ell - 1} + 3 \\
		k+2
			& \text{~for~} i = k, k + 1, \dots, 2k + 2 \\
		i + 2k + 3
			& \text{~for~} i = 0, 1, \dots k - 1
	\end{cases} $$
	
where $t^* = \lceil \log_2(3k-i+4)\rceil$ is the step at which $v_i$ was cleaned.  By a relabeling of vertices, configuration $\omega_{\ell+2}$ is equivalent to $\omega_0$.  We further note that at each step of the cleaning process, no vertex had more than $n-1 = 3k+2$ brushes.  \end{proof}

In the next lemma, we start with a $1$-clique configuration of $K_n$ and use it, along with the previous theorem, to build a $1$-clique configuration of $K_{3n+3}$.  In Theorem~\ref{thm:n=3kbetterbound}, vertices of $K_{3n+3}$ were cleaned in 3 phases, with $n$ vertices cleaned during phase 1. 

\begin{lemma}\label{lemma:+0} Let $n \in \mathbb{N}$.  There exists a $1$-clique configuration that cleans $K_{3n+3}$ using $2  S_{n} + 3n^2+8n+6$ brushes.\end{lemma}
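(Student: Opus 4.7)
My plan is to imitate the three-phase structure used in the proof of Theorem~\ref{thm:n=3kbetterbound} with $k=n$, but to replace the phase~$3$ brush counts $0,1,\dots,n-1$ there by the values of the given $1$-clique configuration $\omega_0^*$ of $K_n$, and to place a \emph{shifted} copy of $\omega_0^*$ on the phase~$1$ vertices. Concretely, partition $V(K_{3n+3})=V_1\cup V_2\cup V_3$ with $|V_1|=|V_3|=n$ and $|V_2|=n+3$, fix bijections $\phi\colon V_1\to V(K_n)$ and $\psi\colon V_3\to V(K_n)$, and define $\omega_0(v)=\omega_0^*(\phi(v))+2n+3$ for $v\in V_1$, $\omega_0(v)=n+2$ for $v\in V_2$, and $\omega_0(v)=\omega_0^*(\psi(v))$ for $v\in V_3$. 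A direct count gives $n(2n+3)+(n+3)(n+2)+2S_n=2S_n+3n^2+8n+6$ brushes, and the bound $\omega_0(v)\leq 3n+2$ follows from $\omega_0^*(v)\leq n-1$.

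\textbf{Simulation of the standalone $K_n$ cleaning in phase~$1$.} The key cancellation is that each $v\in V_1$ has exactly $|V_2|+|V_3|=2n+3$ neighbors outside $V_1$, so the $2n+3$ extra initial brushes on $v$ compensate precisely for those persistently-dirty external edges. I would verify by induction on the step $t$ that as long as only $V_1$ vertices have been cleaned and $m_{t-1}$ of them are already clean, the priming inequality $\omega_{t-1}(v)\geq D_{t-1}(v)$ for $v\in V_1$ on $K_{3n+3}$ collapses to $\omega_0^*(\phi(v))\geq n-1-2m_{t-1}$, which is exactly the priming inequality for $\phi(v)$ at step $t$ of the standalone cleaning of $K_n$ by $\omega_0^*$. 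Hence phase~$1$ replays that standalone cleaning step-for-step and finishes after $\ell^*$ steps with all of $V_1$ clean. The same computations show that no $v\in V_2$ is primed while $m_{t-1}<n$, and no $u\in V_3$ is primed while $m_{t-1}\leq n-1$, so $V_2\cup V_3$ stays dirty throughout phase~$1$.

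\textbf{Phases 2, 3, and the $1$-clique check.} At step $\ell^*+1$ each $v\in V_2$ has exactly $(n+2)+n=2n+2$ brushes against $2n+2$ dirty incident edges, while each $u\in V_3$ has only $\omega_0^*(\psi(u))+n\leq 2n-1<2n+2$ brushes, so $V_2$ is cleaned simultaneously and $V_3$ is not, leaving each $V_2$ vertex with $n+2$ brushes. At step $\ell^*+2$ each $u\in V_3$ has $\omega_0^*(\psi(u))+2n+3$ brushes against $n-1$ dirty edges, so all of $V_3$ is cleaned in a single step with zero net change in brushes. Since the phase~$1$ simulation gives each $v\in V_1$ the final brush count $\omega_K^*(\phi(v))$, the final multiset on $K_{3n+3}$ is $\{\omega_K^*(v_i)\}\cup\{n+2\}^{n+3}\cup\{\omega_0^*(v_j)+2n+3\}$; applying the $1$-clique property $\{\omega_K^*(v_i)\}=\{\omega_0^*(v_i)\}$ of $\omega_0^*$, this coincides with the initial multiset (with the roles of $V_1$ and $V_3$ exchanged), so $\omega_0$ is a $1$-clique configuration of $K_{3n+3}$.

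\textbf{Main obstacle.} The main work is the bookkeeping in the simulation step---carefully writing $\omega_{t-1}(v)$ and $D_{t-1}(v)$ at each step to confirm that the priming inequality on $K_{3n+3}$ reduces to the one on $K_n$ and that no $V_2$ or $V_3$ vertex is primed too early. Once the coincidence $2n+3=|V_2|+|V_3|$ is spotted, every verification collapses to an arithmetic comparison of the same flavor as those already carried out (for the specific choice $\omega_0(v_i)=i$) in the proof of Theorem~\ref{thm:n=3kbetterbound}.
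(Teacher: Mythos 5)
Your proposal is correct and follows essentially the same route as the paper: the identical partition into sets of sizes $n$, $n+3$, $n$ with the same initial configuration (a copy of the $K_n$ configuration shifted by $2n+3$ on the phase-1 set, constant $n+2$ on the middle set, an unshifted copy on the phase-3 set), the same lockstep simulation of the standalone $K_n$ cleaning via the reduction of the priming inequality to $\omega_0^*(\phi(v))\ge n-1-2m_{t-1}$, and the same phase-2/phase-3 and multiset-correspondence checks. No substantive differences to report.
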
  

\begin{proof} Let $n \in \mathbb{N}$ and label the vertices of $K_{n}$ as $v_0,v_1,\dots,v_{n-1}$.  Let $\omega_0'$ be a $1$-clique configuration of $K_{n}$.   Label the vertices of $K_{3n+3}$ as $u_0,u_1,u_2,\dots,u_{3n+2}$, and set $$\omega_0(u_j) = \left\{ \begin{array}{ll}
	\omega_0'(v_j) & \text{if }    ~ 0 \leq j \leq n-1\\
	n+2 & \text{if }  ~ n\leq j \leq 2n+2 \\
	\omega_0'(v_{j-2n-3})+2n+3 & \text{if }  ~ 2n+3 \leq j \leq 3n+2.  \end{array} \right.$$  Let $A = \{u_j~|~2n+3 \leq j \leq 3n+2\},$ $B=\{u_j~|~n \leq j \leq 2n+2\},$ $C=\{u_j~|~0 \leq j \leq n-1\}.$  Sets $A$, $B$, and $C$ are cleaned during phases $1$, $2$, and $3$, respectively.   

We first observe that no vertex of $B \cup C$ can be cleaned until $n$ vertices of $K_{3n+3}$ have been cleaned.  If only $n-1$ vertices have been cleaned, then a vertex in $B \cup C$ will have at most $(n+2)+(n-1) = 2n+1$ brushes, but will have $2n+3$ dirty incident neighbours.  Thus, in phase $1$, only vertices of $A$ will be cleaned.\\

\noindent\underline{Phase 1:} Let $j \in \{ 2n+3,\dots, 3n+2\}$.  We aim to show that if $v_{j-2n-3}$ is cleaned during step $t$ in $K_n$ then $u_j$ is cleaned during step $t$ in $K_{3n+3}$.  Suppose $n>1$ (one can manually check that the configuration is a $1$-clique configuration for $n=1$).  Obviously, the previous statement holds for $t=1$ and suppose that the statement holds for all $t<t'$ for some step $t'$.  By induction, we prove the statement holds for $t=t'$.  In $K_n$, suppose vertex $v_{j-2n-3}$ is cleaned during step $t'$ and $x$ vertices were cleaned during earlier steps;  then \begin{equation}\label{ee}\omega_0'(v_{j-2n-3})+x \geq n-1-x ~\Rightarrow~ \omega_0'(v_{j-2n-3}) \geq n-1-2x.\end{equation}  

Using (\ref{ee}), we see that vertex $u_j$ in $K_{3n+3}$ is cleaned during step $t'$: $$\omega_{t'-1}(u_j) = \omega_0(u_j)+x = \omega_0'(v_{j-2n-3})+2n+3+x  \geq 3n+2-x  = D_{t'-1}(u_j).$$

Suppose the vertices of $A$ (and of $K_n$) are cleaned by step $\kappa$.  Since $\omega_0'$ is a $1$-clique configuration in $K_n$, there is a one-to-one correspondence between the sets $\{\omega_\kappa(u_{3n+2}),\omega_\kappa(u_{3n+1}),\dots, \omega_\kappa(u_{2n+3})\}$ and $\{\omega_0'(v_0),\omega_0'(v_1), \dots, \omega_0'(v_{n-1})\}.$  Further, since $\omega_{t-1}'(v_{j-2n-3}) \leq n-1$ (as $\omega_0'$ is a $1$-clique configuration) and $\omega_{t-1}'(v_{j-2n-3})=\omega_0'(v_{j-2n-3})+x$, we conclude $$\omega_{t-1}(u_j) = \omega_0'(v_{j-2n-3})+x+(2n+3) \leq (n-1)+(2n+3) = 3n+2.$$  Thus, a vertex in $A$ has at most $3n+2$ brushes at any step.\\

\noindent\underline{Phase 2:} Next, we observe that no vertex of $C$ can be cleaned at step $\kappa+1$: each vertex of $C$ has at most $(n-1)+n = 2n-1$ brushes, but $2n+2$ dirty incident neighbours (since $|A|=n$ and $|B|=n+3$).  Similarly for $u_j \in B$, $\omega_{\kappa-1}(u_j) \leq 2n+1 < D_{\kappa-1}(u_i)$ and $D_{\kappa-1}(u_j) \geq 2n+3$, so no vertex of set $B$ can be cleaned at step $\kappa$ (or earlier).

However, for each $u_j \in B$, $\omega_\kappa(u_j) = 2n+2$.  Thus, each vertex of $B$ is cleaned at step $\kappa+1$, leaving $\omega_{\kappa+1}(u_j) = n+2$ for each $v_j \in B$.  Clearly there is a one-to-one correspondence between the sets $\{\omega_{\kappa+1}(u_{n}),\omega_{\kappa+1}(u_{n+1}),\dots,\omega_{\kappa+1}(u_{2n+2})\}$ and $\{\omega_0(u_{n}),\omega_0(u_{n+1}),\dots,\omega_0(u_{2n+2})\}$.  Further, we note that a vertex in $B$ has at most $2n+2<3n+2$ brushes at any step.\\ 

\noindent\underline{Phase 3:} Finally, we consider the vertices of $C$.  For $u_j \in C$, $\omega_{\kappa+1}(u_j)=\omega_0'(v_j)+ 2n+3 \geq 2n+3$ and since $|C|=n$, every vertex of $C$ is cleaned at step $\kappa+2$.  Thus, there is a one-to-one correspondence between the sets $\{\omega_{\kappa+2}(u_0), \omega_{\kappa+2}(u_1),\dots, \omega_{\kappa+2}(u_{n-1}) \}$ and $\{\omega_0(u_{2n+3}), \omega_0(u_{2n+4}), \dots, \omega_0(u_{3n+2})\}$.  Further, we note that a vertex in $C$ has at most $3n+2$ brushes at any step.   \end{proof}

\begin{lemma}\label{lemma:+1} Let $n \in \mathbb{N}$.  There exists a $1$-clique configuration that cleans $K_{3n+1}$ using $2S_{n} + 3n^2+2n$ brushes where $S_n$ is a $1$-clique configuration for $K_n$.\end{lemma}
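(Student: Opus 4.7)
The plan is to mimic the three-phase construction from Lemma~\ref{lemma:+0}, adapting the set sizes for $K_{3n+1}$. Let $\omega_0'$ be a $1$-clique configuration of $K_n$ on vertices $v_0,\dots,v_{n-1}$, label $K_{3n+1}$ as $u_0,\dots,u_{3n}$, and partition the vertices as $C = \{u_0,\dots,u_{n-1}\}$, $B = \{u_n,\dots,u_{2n}\}$, $A = \{u_{2n+1},\dots,u_{3n}\}$, so that $|A|=|C|=n$ and $|B|=n+1$. Define
$$\omega_0(u_j) = \begin{cases} \omega_0'(v_j) & 0 \leq j \leq n-1,\\ n & n \leq j \leq 2n,\\ \omega_0'(v_{j-2n-1}) + 2n+1 & 2n+1 \leq j \leq 3n.\end{cases}$$
A direct sum gives the total brush count $S_n + n(n+1) + (S_n + n(2n+1)) = 2S_n + 3n^2 + 2n$, matching the claim.

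I would then verify three cleaning phases, essentially repeating the argument of Lemma~\ref{lemma:+0} with adjusted parameters. \emph{Phase 1:} Only vertices of $A$ can act, because any $u_j \in B \cup C$ carries at most $n + (n-1) = 2n-1$ brushes while facing at least $3n - (n-1) = 2n+1$ dirty edges after any $x \leq n-1$ prior cleanings. By induction on the step, $u_j \in A$ is cleaned at exactly the step at which $v_{j-2n-1}$ is cleaned in $\mathfrak{C}(K_n,\omega_0')$: this reduces to the algebraic identity $\omega_0(u_j) + x \geq 3n - x$ iff $\omega_0'(v_{j-2n-1}) \geq n-1-2x$, precisely the condition in $K_n$. \emph{Phase 2:} At step $\kappa+1$, each $u_j \in B$ carries $n + n = 2n$ brushes and has exactly $|B|-1+|C| = 2n$ dirty neighbours, so all of $B$ cleans simultaneously; the parallel-cleaning formula yields $\omega_{\kappa+1}(u_j) = 2n - 2n + n = n$. \emph{Phase 3:} At step $\kappa+2$, each $u_j \in C$ has $\omega_0'(v_j) + n + (n+1) = \omega_0'(v_j) + 2n+1$ brushes and only $n-1$ dirty neighbours, so all of $C$ cleans, and since each vertex of $C$ sends $n-1$ and simultaneously receives $n-1$, its brush count is unchanged.

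It then remains to confirm the $1$-clique property. The final brush multiset on $A$ equals the final multiset of $\omega_0'$ on $K_n$, which by hypothesis equals $\{\omega_0'(v_0),\dots,\omega_0'(v_{n-1})\}$, the initial brushes on $C$. The final brushes on $B$ are all $n$, matching the initial values on $B$. The final brushes on $C$ are $\omega_0'(v_j) + 2n+1$, which form the same multiset as the initial brushes on $A$. The cap $\omega_t(u_j) \leq 3n$ holds on $A$ during Phase 1 by the corresponding bound for $\omega_0'$ in $K_n$, and is immediate at the explicit values reached in Phases 2 and 3.

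The main obstacle is purely organisational rather than conceptual: one has to verify that the $A$-cleaning in Phase 1 mirrors $K_n$ step-for-step while ruling out any premature cleaning in $B \cup C$, and that the multiset equalities close up correctly between the initial and final configurations. Because the structural argument is essentially the one used in Lemma~\ref{lemma:+0}, with only $|B| = n+1$ in place of $n+3$ and initial value $n$ in place of $n+2$, no genuinely new difficulty appears beyond rerunning the bookkeeping with the adjusted parameters.
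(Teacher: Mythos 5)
Your proposal uses exactly the initial configuration the paper supplies for $K_{3n+1}$ and reruns the same three-phase argument as the paper's proof of Lemma~\ref{lemma:+0}, which is precisely how the authors say the omitted proof goes; the brush count, the step-for-step mirroring of $\mathfrak{C}(K_n,\omega_0')$ on $A$, the multiset correspondences, and the cap $\omega_t(u_j)\le 3n$ all check out. The only item to add is the one-line observation that at step $\kappa+1$ a vertex of $C$ has at most $(n-1)+n=2n-1$ brushes against $2n$ dirty neighbours (your Phase~1 bound only covers $x\le n-1$ prior cleanings), so $C$ is not cleaned prematurely alongside $B$.
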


\begin{lemma}\label{lemma:+2} Let $n \in \mathbb{N}$.  There exists an initial $1$-clique configuration that cleans $K_{3n+2}$ using $2  S_{n} + 3n^2+5n+2$ brushes where $S_n$ is a $1$-clique configuration for $K_n$.
\end{lemma}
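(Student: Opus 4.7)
The plan is to mirror the three-phase construction of Lemma~\ref{lemma:+0}, but with the middle block shrunk by one vertex and its common brush value reduced by one, so that both the vertex count and the brush sum come out correctly for $K_{3n+2}$. Concretely, given a $1$-clique configuration $\omega_0'$ of $K_n$ with vertices $v_0,\dots,v_{n-1}$, I would label the vertices of $K_{3n+2}$ as $u_0,\dots,u_{3n+1}$ and set
\[
\omega_0(u_j)=\begin{cases}
\omega_0'(v_j) & \text{if } 0\le j\le n-1,\\
n+1 & \text{if } n\le j\le 2n+1,\\
\omega_0'(v_{j-2n-2})+2n+2 & \text{if } 2n+2\le j\le 3n+1.
\end{cases}
\]
Write $A=\{u_j:2n+2\le j\le 3n+1\}$, $B=\{u_j:n\le j\le 2n+1\}$, $C=\{u_j:0\le j\le n-1\}$, so that $|A|=n$, $|B|=n+2$, $|C|=n$. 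The brush total is $(S_n+n(2n+2))+(n+2)(n+1)+S_n=2S_n+3n^2+5n+2$, and since $\omega_0'(v)\le n-1$, every $\omega_0(u_j)\le 3n+1$, so condition~(1) of a $1$-clique configuration holds initially.

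The first substantive step is to show that Phase~1 cleans exactly the set $A$: after $k<n$ cleanings, any $u_j\in B$ has at most $(n+1)+k\le 2n$ brushes but $3n+1-k\ge 2n+2$ dirty neighbors, and $C$-vertices fare worse, so only $A$-vertices can be primed early. As in Lemma~\ref{lemma:+0}, I would then prove by induction on the step index that if $v_{j-2n-2}$ is cleaned at step $t$ in the cleaning of $K_n$ under $\omega_0'$, then $u_j$ is cleaned at step $t$ in $K_{3n+2}$: the priming inequality $\omega_0'(v_{j-2n-2})+2n+2+x\ge 3n+1-x$ for $u_j$ collapses to $\omega_0'(v_{j-2n-2})\ge n-1-2x$, which is precisely the priming condition for $v_{j-2n-2}$ after $x$ prior cleanings in $K_n$. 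The $1$-clique bound on $\omega_0'$ also ensures no $A$-vertex ever holds more than $3n+1$ brushes during the process.

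At the end of Phase~1 (step $\kappa$) each $u_j\in B$ has $(n+1)+n=2n+1$ brushes against $(|B|-1)+|C|=2n+1$ dirty neighbors, so all of $B$ is cleaned simultaneously at step $\kappa+1$; the post-cleaning count $2n+1-(2n+1)+(|B|-1)=n+1$ recovers the initial value, yielding a trivial correspondence on $B$. In Phase~3 each $u_j\in C$ then has $\omega_0'(v_j)+2n+2$ brushes against only $n-1$ dirty neighbors and is cleaned at step $\kappa+2$. A short computation, running the step-by-step identification from Phase~1 and using the $1$-clique property of $\omega_0'$, matches the final $A$-values with the initial $C$-values and the final $C$-values with the initial $A$-values under the shift $j\mapsto j+2n+2$, so $\omega_0$ is indeed a $1$-clique configuration. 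The main obstacle, as in Lemma~\ref{lemma:+0}, is carrying out the Phase~1 induction cleanly; once that is established, the Phase~2 arithmetic is forced by the choice $|B|=n+2$ with common value $n+1$, which is precisely tuned so every $B$-vertex is primed simultaneously with no surplus and the block returns to its initial state.
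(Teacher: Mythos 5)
Your proposal uses exactly the initial configuration the paper specifies for $K_{3n+2}$ (the paper omits the proof as being analogous to that of Lemma~\ref{lemma:+0}), and your three-phase argument, the Phase~1 induction reducing the priming inequality for $u_j$ to that of $v_{j-2n-2}$, and the arithmetic $2S_n+3n^2+5n+2$ all check out. This is essentially the same approach as the paper's.
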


The proofs of Lemmas~\ref{lemma:+1} and~\ref{lemma:+2} are extremely similar to the proof of Lemma~\ref{lemma:+1} and consequently have been omitted.  We do, however, provide the initial configurations used to prove Lemmas~\ref{lemma:+1} and~\ref{lemma:+2}.  Label the vertices of $K_{n}$ as $v_0,v_1,\dots,v_{n-1}$. and let $\omega_0'$ be a $1$-clique configuration of $K_{n}$.   Label the vertices of $K_{3n+1}$ as $u_0,u_1,u_2,\dots,u_{3n}$ and set $$\omega_0(u_j) =
	\begin{cases}
		\omega_0'(v_j)
			& \text{~if~} 0 \leq j \leq n-1 \\
		n
			& \text{~if~} n \leq j \leq 2n \\
		\omega_0'(v_{j-2n-1})+2n+1
			& \text{~if~} 2n+1 \leq j \leq 3n.
	\end{cases}$$   Label the vertices of $K_{3n+2}$ as $u_0,u_1,u_2,\dots,u_{3n+1}$, and set $$\omega_0(u_j) = \left\{ \begin{array}{ll}
	\omega_0'(v_j) & \text{if }    ~ 0 \leq j \leq n-1\\
	n+1 & \text{if }  ~ n\leq j \leq 2n+1 \\
	\omega_0'(v_{j-2n-2})+2n+2 & \text{if }  ~ 2n+2 \leq j \leq 3n+1.  \end{array} \right.$$
	
Iteratively applying Lemmas~\ref{lemma:+0}-\ref{lemma:+2}, yields the next corollary.

\begin{corollary}\label{cor}
Let $n_0 \in \mathbb{N}$ and for $i \in \mathbb{N}$, let $n_i = 3n_{i-1}+d_i$ for $d_i \in \{1,2,3\}$.  Then there exists an initial $1$-clique configuration that cleans $K_{n_i}$ using $2 \cdot S_{n_{i-1}} + \frac{1}{3}n_i^2+O(n_i)$ brushes.
\end{corollary}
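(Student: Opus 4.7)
The plan is to treat the corollary as a direct, one-step consequence of Lemmas~\ref{lemma:+0}, \ref{lemma:+1}, and~\ref{lemma:+2} together with the identity relating $n_i$ to $n_{i-1}$. There is no genuine induction to carry out inside the proof of the corollary itself; the iterative flavor in the surrounding discussion refers to how this corollary will be chained down the line (for Theorem~\ref{cor:end}), not to an induction one needs to set up here.

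First I would case-split on $d_i\in\{1,2,3\}$. Since, by hypothesis, we already have a $1$-clique configuration $\omega_0'$ for $K_{n_{i-1}}$ with total $S_{n_{i-1}}$ brushes, each case simply invokes one of the three lemmas applied to $n=n_{i-1}$: Lemma~\ref{lemma:+1} gives a $1$-clique configuration for $K_{3n_{i-1}+1}$ using $2S_{n_{i-1}}+3n_{i-1}^{2}+2n_{i-1}$ brushes, Lemma~\ref{lemma:+2} gives one for $K_{3n_{i-1}+2}$ using $2S_{n_{i-1}}+3n_{i-1}^{2}+5n_{i-1}+2$ brushes, and Lemma~\ref{lemma:+0} gives one for $K_{3n_{i-1}+3}$ using $2S_{n_{i-1}}+3n_{i-1}^{2}+8n_{i-1}+6$ brushes. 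In every case the resulting configuration is a $1$-clique configuration of $K_{n_i}$ by the conclusions of the respective lemmas, so the existence part is handled.

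The only remaining task is an elementary algebraic verification that the three brush counts above all match the claimed form $2S_{n_{i-1}}+\tfrac{1}{3}n_i^{2}+O(n_i)$. Expanding $n_i=3n_{i-1}+d_i$ gives
\[
\tfrac{1}{3}n_i^{2}\;=\;3n_{i-1}^{2}+2d_i\,n_{i-1}+\tfrac{d_i^{2}}{3},
\]
so in each case the dominant $3n_{i-1}^{2}$ term coincides with $\tfrac{1}{3}n_i^{2}$ up to a linear correction in $n_{i-1}$. Since $n_{i-1}=(n_i-d_i)/3=O(n_i)$, every residual linear term from the three lemmas is absorbed into the $O(n_i)$ error. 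This finishes the proof.

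I do not expect a real obstacle: the content of the corollary is packaging, and all the delicate work (verifying that each of the three constructions produces a genuine $1$-clique configuration and respects the brush count) was already done inside Lemmas~\ref{lemma:+0}-\ref{lemma:+2}. The only care needed is to state explicitly that one must choose the correct lemma according to the value of $d_i$, and to note that $n_{i-1}=O(n_i)$ so that the residual linear terms are swallowed by the $O(n_i)$ notation.
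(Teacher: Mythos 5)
Your proposal is correct and matches the paper's (implicit) argument: the paper offers no written proof beyond the remark that the corollary follows from applying Lemmas~\ref{lemma:+0}--\ref{lemma:+2}, and your case split on $d_i$ plus the expansion $\frac{1}{3}n_i^2 = 3n_{i-1}^2 + 2d_i n_{i-1} + \frac{d_i^2}{3}$ is exactly the routine verification being left to the reader. You are also right that no induction is needed inside the corollary itself; the chaining happens later in Theorem~\ref{theorem:b}.
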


\begin{theorem}\label{theorem:b}  Let $n_0\in \mathbb{N}$ and for $i \in \mathbb{N}$, let $n_i = 3n_{i-1}+d_i$ for $d_i \in \{1,2,3\}$.  Then \begin{equation}\label{eqn}S_{n_i} \leq  \Big[ \frac{3}{7}+\frac{1}{63}\Big(\frac{2}{9}\Big)^{i}\Big] n_i^2 + O(n_i).\end{equation}
\end{theorem}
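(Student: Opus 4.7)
The plan is to prove~(\ref{eqn}) by induction on $i$, using Corollary~\ref{cor} as the recursive engine. Writing $\alpha_i := \frac{3}{7}+\frac{1}{63}\bigl(\frac{2}{9}\bigr)^{i}$, I first observe that $\alpha_i$ satisfies the linear recursion $\alpha_i = \frac{2}{9}\alpha_{i-1}+\frac{1}{3}$, whose fixed point is exactly $\frac{3}{7}$; this is precisely the recursion that Corollary~\ref{cor} produces once one invokes the estimate $n_{i-1}^2 \leq n_i^2/9$ implied by $n_i = 3n_{i-1}+d_i$ with $d_i \in \{1,2,3\}$. So the whole shape of the bound is dictated by this fixed-point analysis.

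For the base case $i=0$, note that $\alpha_0 = \frac{3}{7}+\frac{1}{63} = \frac{27+1}{63} = \frac{4}{9}$, so what is required is a $1$-clique configuration for $K_{n_0}$ using $\frac{4}{9}n_0^2+O(n_0)$ brushes. When $n_0 \equiv 0 \pmod 3$ this is exactly the content of Theorem~\ref{thm:n=3kbetterbound}; when $n_0 \equiv 1,2 \pmod 3$ the configurations of~\cite{parallel} that realize the upper bound in~(\ref{bound}) are already $1$-clique configurations, as remarked in the paragraph preceding Theorem~\ref{thm:n=3kbetterbound}.

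For the inductive step, I assume $S_{n_{i-1}} \leq \alpha_{i-1} n_{i-1}^2 + O(n_{i-1})$, apply Corollary~\ref{cor} to obtain $S_{n_i} \leq 2S_{n_{i-1}}+\frac{1}{3}n_i^2+O(n_i)$, and then substitute the inductive hypothesis together with $n_{i-1}^2 \leq n_i^2/9$ and $n_{i-1} \leq n_i/3$. This yields
\[ S_{n_i} \;\leq\; \tfrac{2}{9}\alpha_{i-1} n_i^2 + \tfrac{1}{3}n_i^2 + O(n_i) \;=\; \alpha_i n_i^2 + O(n_i), \]
which is the desired bound.

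The only subtlety I anticipate is ensuring the implied constant in $O(n_i)$ does not grow with $i$. Each induction step scales the previous linear error by a factor of $2/3$ (a factor of $2$ from Corollary~\ref{cor}, a factor of $1/3$ from $n_{i-1} \leq n_i/3$) and adds a bounded linear term, so the accumulated error forms a convergent geometric series. A single constant $c$, chosen large enough to absorb both the base case and three times the linear coefficient appearing in Corollary~\ref{cor}, therefore works uniformly in $i$, legitimizing the use of a single $O(n_i)$ throughout.
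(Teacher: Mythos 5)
Your proposal is correct and follows essentially the same route as the paper: induction on $i$ with base case $S_{n_0}\le \frac{4}{9}n_0^2+O(n_0)$ (from Theorem~\ref{thm:n=3kbetterbound} and the configurations of~\cite{parallel}) and inductive step driven by Corollary~\ref{cor} together with $n_{i-1}\le n_i/3$. Your explicit identification of the recursion $\alpha_i=\frac{2}{9}\alpha_{i-1}+\frac{1}{3}$ and your remark on keeping the implied constant in $O(n_i)$ uniform in $i$ are welcome clarifications of points the paper leaves implicit, but they do not change the argument.
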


\begin{proof} Let $n_0 \in \mathbb{N}$ and for $i \in \mathbb{N}$, let $n_i = 3n_{i-1}+d_i$ for $d_i \in \{1,2,3\}$.  By Theorem~\ref{thm:n=3kbetterbound} along with Theorems 4.8~\cite{parallel} and 4.10~\cite{parallel}, we know there exists an initial $1$-clique configuration that cleans $K_{n_0}$ using $\frac{4}{9}n_0^2 + O(n_0)$ brushes:  so $S_{n_0} \leq \frac{4}{9}n_0^2+O(n_0)$.

Assume (\ref{eqn}) holds for all $i\leq k$ for $i, k\in \mathbb{Z}^+$. Now consider $i=k+1$ then from Corollary \ref{cor} and the inductive hypothesis it follows that: 
\begin{equation}
\begin{aligned}
S_{n_{k+1}} &\leq 2\cdot S_{n_k} + \frac{1}{3}n_{k+1}^2+O(n_k)\\
	  &\leq 2\Big[\Big( \frac{3}{7}+\frac{1}{63}\Big(\frac{2}{9}\Big)^k\Big) n_k^2 + O(n_k)\Big]+\frac{1}{3}n_{k+1}^2+O(n_{k+1}) \\
      &\leq \Big[ \frac{3}{7}+\frac{1}{63}\Big(\frac{2}{9}\Big)^{{k+1}}\Big] n_{k+1}^2 + O(n_{k+1})
\end{aligned}
\end{equation}
\end{proof}

Theorem~\ref{cor:end} follows immediately from Theorem~\ref{theorem:b}.

\begin{theorem}\label{cor:end}  Let $n_0$ be a non-negative integer and for $i \in \mathbb{Z}^+$, let $n_i = 3n_{i-1}+d_i$ for $d_i \in \{1,2,3\}$.  Then $$cpb(K_{n_i}) \leq  \Big[ \frac{3}{7}+\frac{1}{63}\Big(\frac{2}{9}\Big)^{i}\Big] n_i^2 + O(n_i).$$\end{theorem}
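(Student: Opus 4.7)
The plan is to assemble two already-established pieces: the inequality $cpb(K_n) \le S_n$ for any $1$-clique configuration, and the bound on $S_{n_i}$ supplied by Theorem~\ref{theorem:b}. First, I would spell out why a $1$-clique configuration $\omega_0$ witnesses continual cleanability. Condition~(2) of the definition guarantees a bijection between the multisets $\{\omega_0(v_0), \dots, \omega_0(v_{n-1})\}$ and $\{\omega_K(v_0), \dots, \omega_K(v_{n-1})\}$; because $K_n$ is vertex-transitive, the automorphism permuting the vertex labels along this bijection sends $\omega_K$ back to $\omega_0$. Iterating the cleaning process therefore returns to the initial configuration at each round, so $cpb(K_n) \le S_n(\omega_0)$, as already noted in the paper immediately after the definition of $S_n$.

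Second, I would invoke Theorem~\ref{theorem:b}, which provides, for the recursive sequence $n_i = 3 n_{i-1} + d_i$ with $d_i \in \{1,2,3\}$, a $1$-clique configuration of $K_{n_i}$ satisfying $S_{n_i} \le \bigl[\tfrac{3}{7} + \tfrac{1}{63}\bigl(\tfrac{2}{9}\bigr)^{i}\bigr] n_i^2 + O(n_i)$. Substituting this bound into $cpb(K_{n_i}) \le S_{n_i}$ yields the desired inequality in one line.

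There is essentially no obstacle at this final step: all the technical content has been pushed into Theorem~\ref{theorem:b} and the lemmas that feed it. The only thing worth verifying explicitly is that the base case of the induction inside Theorem~\ref{theorem:b} is available for every residue class of $n_0$ modulo $3$; this is handled by Theorem~\ref{thm:n=3kbetterbound} when $n_0 \equiv 0 \pmod 3$ and by Theorems~4.8 and 4.10 of~\cite{parallel} for the other two classes, each of which delivers a $1$-clique configuration with $\tfrac{4}{9} n_0^2 + O(n_0)$ brushes. Once the base case is in hand, the recursion in Corollary~\ref{cor}, together with the inductive computation already carried out in the proof of Theorem~\ref{theorem:b}, does the rest, and Theorem~\ref{cor:end} follows with no additional work.
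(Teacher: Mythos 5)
Your proposal is correct and follows the paper's own route exactly: the paper derives Theorem~\ref{cor:end} immediately from Theorem~\ref{theorem:b} via the observation $cpb(K_{n_i}) \leq S_{n_i}$, which is precisely your argument. Your added justification that the permutation of brush values in a $1$-clique configuration, combined with the symmetry of $K_n$, yields continual cleanability is a worthwhile explicit spelling-out of what the paper leaves implicit after defining $S_n$.
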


\begin{corollary}\label{theorem:lim} Let $n_0$ be a non-negative integer and for $i \in \mathbb{Z}^+$, let $n_i = 3n_{i-1}+d_i$ for $d_i \in \{1,2,3\}$. Then $$\lim_{i \to \infty}\ \frac{b(K_{n_i})}{cpb(K_{n_i})}\ge\frac{7}{12}.$$ \end{corollary}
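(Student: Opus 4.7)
The plan is to combine the upper bound on $cpb(K_{n_i})$ from Theorem~\ref{cor:end} with the known exact value of $b(K_n)$ and then simply pass to the limit. First, I would invoke the result of~\cite{MNP} that $b(K_n)=\lfloor n^2/4\rfloor$; in particular $b(K_{n_i})=\tfrac{1}{4}n_i^2+O(1)$. Second, I would substitute the bound from Theorem~\ref{cor:end}, yielding
\[
\frac{b(K_{n_i})}{cpb(K_{n_i})}\;\ge\;\frac{\tfrac{1}{4}n_i^2+O(1)}{\bigl[\tfrac{3}{7}+\tfrac{1}{63}(2/9)^{i}\bigr]n_i^2+O(n_i)}.
\]

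Next I would observe that the recursion $n_i=3n_{i-1}+d_i$ with $d_i\in\{1,2,3\}$ forces $n_i\ge 3^i n_0$, so $n_i\to\infty$ as $i\to\infty$. Dividing numerator and denominator of the displayed ratio by $n_i^2$, the error terms $O(n_i)/n_i^2$ and $O(1)/n_i^2$ vanish, while $(2/9)^{i}\to 0$. Hence
\[
\liminf_{i\to\infty}\frac{b(K_{n_i})}{cpb(K_{n_i})}\;\ge\;\frac{1/4}{3/7}\;=\;\frac{7}{12},
\]
which is the desired inequality.

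Essentially all the work has already been done in Theorems~\ref{thm:n=3kbetterbound}, \ref{theorem:b}, and~\ref{cor:end}; the only remaining content of the corollary is verifying that the side condition $n_i\to\infty$ holds (immediate from the recursion) and that both the additive $O(n_i)$ correction and the geometric term $\tfrac{1}{63}(2/9)^i$ become negligible in the limit. There is no real obstacle here beyond bookkeeping; the proof is a one-line limit computation from the two cited results.
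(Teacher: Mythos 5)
Your proposal is correct and is essentially the paper's own argument: the paper treats the corollary as immediate from $b(K_n)=\lfloor n^2/4\rfloor$ together with Theorem~\ref{cor:end}, and your limit computation $\frac{1/4}{3/7}=\frac{7}{12}$ with the vanishing of the $(2/9)^i$ and lower-order terms is exactly the intended verification.
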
 

Corollary~\ref{theorem:lim} disproves the conjecture~(\ref{conj}) of~\cite{parallel}. \\


\end{document}